\theoremstyle{plain}
\newtheorem{thm}{Theorem}[section]
\newtheorem{theorem}[thm]{Theorem}
\newtheorem{lemma}[thm]{Lemma}
\theoremstyle{definition}
\newtheorem{definition}[thm]{Definition}
\newtheorem{example}[thm]{Example}
\newtheorem{thevarthm}[thm]{\varthmname}
\newenvironment{varthm*}[1]{\trivlist\item[]{\bf #1.}\it}{\endtrivlist}
\renewcommand\geq{\geqslant}
\renewcommand\leq{\leqslant}
\newcommand\be{\begin{eqnarray*}}
\newcommand\ee{\end{eqnarray*}}
\newcommand\F{\mathbb F}
\renewcommand\P{\mathbb P}
\newcommand\call{{\mathcal L}}
\newcommand\newop[2]{\def#1{\mathop{\rm #2}\nolimits}}
\newop\log{log}
\newop\ord{ord}
\newop\Gal{Gal}
\newop\SL{SL}
\newop\GL{GL}
\newop\Bl{Bl}
\newop\mult{mult}
\newop\mass{mass}
\newop\div{div}
\newop\codim{codim}
\newop\sing{sing}
\newop\vdim{vdim}
\newop\edim{edim}
\newop\Ass{Ass}
\newop\size{size}
\newop\reg{reg}
\newop\areg{areg}
\newop\asreg{asreg}
\newop\satdeg{satdeg}
\newop\supp{supp}
\newop\gin{gin}
\newop\ini{in}
\newop\vol{vol}
\newop\sat{sat}
\newop\chara{char}
\newop\length{length}
\newop\depth{depth}
\newop\characteristic{char}
\newcommand\eqnref[1]{(\ref{#1})}
\newcolumntype{Y}{>{\centering\arraybackslash}X}
\def\keywordname{{\bfseries Keywords}}%
\def\keywords#1{\par\addvspace\medskipamount{\rightskip=0pt plus1cm
\def\and{\ifhmode\unskip\nobreak\fi\ $\cdot$
}\noindent\keywordname\enspace\ignorespaces#1\par}}
\def\subclassname{{\bfseries Mathematics Subject Classification
(2000)}\enspace}
\def\subclass#1{\par\addvspace\medskipamount{\rightskip=0pt plus1cm
\def\and{\ifhmode\unskip\nobreak\fi\ $\cdot$
}\noindent\subclassname\ignorespaces#1\par}}
\definecolor{qqqqff}{rgb}{0,0,1}
\definecolor{uuuuuu}{rgb}{0.27,0.27,0.27}
\definecolor{zzttqq}{rgb}{0.6,0.2,0}
\definecolor{xdxdff}{rgb}{0.49019607843137253,0.49019607843137253,1.0}
\begin{document}

\author{Justyna~Szpond}
\title{On linear Harbourne constants}
\date{\today}
\maketitle
\thispagestyle{empty}

\begin{abstract} In this note we compute values of global linear Harbourne constants over arbitrary fields
for up to ten lines. These invariants have appeared recently in the discussions around the Bounded
Negativity Conjecture, see \cite{BNC}. They seem to be of independent interest also from the point
of view of combinatorics.

\keywords{ Bounded Negativity Conjecture, arrangements of lines, combinatorial arrangements}
\subclass{14C20, 52C30, 05B30}
\end{abstract}

%*****************************************************************************

\section{Introduction}
\label{intro}
In recent years, there has been growing interest in negative curves on algebraic surfaces. The Bounded Negativity Conjecture (BNC for short) is probably the most interesting open question in this area. The BNC predicts that for any smooth complex surface $X$ there exists a lower bound for the selfintersection of reduced divisors on $X$. It is not known if the Bounded Negativity property is invariant in the birational class of a surface, i.e. given birational surfaces $X$ and $Y$, it is not known if curves on $X$ have bounded negativity if and only if they do on $Y$. As the first step towards understanding this question, in \cite{BNC} the authors introduce and study Harbourne constants (see Definition \ref{Hconstants} for details). 
It is well known that these constants can become arbitrarily small in finite characteristic
when allowing $d$ to grow infinitely. It is an intriguing question if they are bounded over the field of complex numbers.
The purpose of this note is to actually compute these constants for a low number of lines (up to $10$) defined over an \emph{arbitrary} field. This is a problem of combinatorial flavor and we hope that results presented here might be of interest also from this point of view. We restrict to up to $10$ lines since configurations of more lines cannot be easily dealt with
the tools developed here, see \cite{LATR}. We hope to come back to this case in the sequel paper.\\
Our main result is Theorem \ref{main}.

   \section{Configurations of lines}
In this section we collect some basic facts about configurations of lines in projective planes. By a configuration of lines we understand a family $\mathcal{L}=\{L_1,\ldots,L_d\}$ of mutually distinct lines. Let $\mathcal{P}(\mathcal{L})=\{P_1,\ldots,P_s\}$ be the set of singular points of the configuration i.e. intersection points of lines in $\mathcal{L}$. For a point $P\in\mathbb{P}^2$ we denote by $m_{\mathcal{L}}(P)$ the number of lines from $\mathcal{L}$ passing through that point. Let $t_k=t_k(\mathcal{L})$ denote the number of points where exactly $k\geq 2$ lines meet. Then there is the obvious combinatorial equality
   \begin{equation}\label{eq: combinatorial}
      \binom{d}{2}=\sum\limits_{k\geq 2}t_k\binom{k}{2}.
   \end{equation}
This equality provides the first constrain on the existence of line configuration. Let $T=(t_2,\ldots,t_d)$ denote a non-negative integral solution to \eqref{eq: combinatorial}. It is not clear when such a solution comes from a configuration of lines. Therefore we present some more geometrical criteria for the existence of certain line configurations. They are surely well know to experts but we were not able to find a proper citation in the literature. Thus we collect these facts for the convenience of a reader.
\begin{lemma}[Parity Criterion]\label{lem: points on one line}
   Let $\call=\left\{L_1,\ldots,L_d\right\}$ be a configuration of lines.
   Let $L\in\call$ be a fixed line. Let $P_1(L),\ldots,P_r(L)$
   be intersection points of $L$ with other configuration lines
   with multiplicities $m_1(L),\ldots,m_r(L)$ respectively. Then
   $$d-1=\sum\limits_{k=1}^r(m_k(L)-1).$$
%   In particular, if there are only triple points on a line $L$,
%   then $d$ is an odd number.
\end{lemma}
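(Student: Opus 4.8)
The plan is to count incidences between the fixed line $L$ and the other $d-1$ configuration lines in two different ways, using the intersection points $P_1(L),\ldots,P_r(L)$ as the bookkeeping device. The key observation is that every line $L_i \neq L$ in the configuration meets $L$ in exactly one point (since distinct lines in $\P^2$ intersect in precisely one point), and that point must be one of the $P_j(L)$.

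First I would fix the line $L$ and consider the set of $d-1$ remaining lines $L_i \in \call \setminus \{L\}$. For each such $L_i$, its unique intersection point with $L$ lies among $P_1(L),\ldots,P_r(L)$, so this defines a map from the $d-1$ lines to the set of intersection points. Next I would count, for each intersection point $P_k(L)$, how many of the remaining lines map to it. By the definition of the multiplicity $m_k(L)$, there are exactly $m_k(L)$ lines of $\call$ passing through $P_k(L)$, and one of these is $L$ itself; hence exactly $m_k(L)-1$ of the \emph{other} lines pass through $P_k(L)$. Summing over all $r$ intersection points and using that the map partitions the $d-1$ lines (each contributes to exactly one point), I obtain
$$d-1 = \sum_{k=1}^r (m_k(L)-1),$$
which is the claimed identity.

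The main thing to verify carefully is that the map from lines to intersection points is a genuine partition: that is, that each of the $d-1$ lines is counted exactly once. This relies on two facts, both standard in $\P^2$: that two distinct lines meet in exactly one point (so no line is missed and none is double-counted by meeting $L$ twice), and that the $P_k(L)$ are by hypothesis precisely the points where $L$ meets the other configuration lines (so no line maps outside the list). Neither of these presents a real obstacle, so the proof is essentially a clean double-counting argument rather than one requiring delicate estimates. I would present it as a single displayed chain of equalities after stating the partition explicitly.
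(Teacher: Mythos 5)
Your double-counting argument is correct: each of the $d-1$ remaining lines meets $L$ in exactly one point, necessarily one of the $P_k(L)$, and exactly $m_k(L)-1$ of them do so at $P_k(L)$, which gives the identity. The paper states this lemma without proof, and your argument is precisely the standard one it implicitly relies on, so there is nothing to add.
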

\begin{lemma}
Let $\mathcal{L}=\{L_1,\ldots,L_d\}$ be a configuration of lines, let  $\mathcal{P}(\mathcal{L})=\{P_1,\ldots,P_s\}$, be the set of singular points of the configuration. Let $m_1,\ldots,m_s$ be the multiplicities of points in $\mathcal{P}(\mathcal{L})$. Without loss of generality we can assume that
\[
m_1\geq m_2\geq\ldots\geq m_s.
\]
Then there are inequalities
\[
m_1+\ldots +m_r\leq d+\binom{r}{2}
\]
for $r=1,\ldots, s$. In the sequel we shall make frequent use of the following two particular cases:
\begin{description}
  \item[a)]\label{lem: multiplicity} for $s\geq 3$ there is a Triangular Inequality
  $$m_1+m_2+m_3\leq d+3,$$
  \item[b)]\label{lem: multiplicity4} for $s\geq 4$ there is a Quadrangle Inequality
  $$m_1+m_2+m_3+m_4\leq d+6.$$
\end{description}
\end{lemma}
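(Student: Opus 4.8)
The plan is to associate to each singular point the set of configuration lines passing through it, and then bound the size of the union of the first $r$ such sets by $d$, the total number of lines. For each $i$, write $S_i$ for the collection of lines of $\mathcal{L}$ through $P_i$, so that $|S_i|=m_i$. The one geometric input I need is that two distinct points of $\mathbb{P}^2$ lie on at most one common line; hence for $i\neq j$ the sets $S_i$ and $S_j$ share at most one line, i.e. $|S_i\cap S_j|\le 1$.

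With this in hand the inequality becomes a pure counting statement. First I would fix $r$ and estimate $|S_1\cup\ldots\cup S_r|$ from below by the first two terms of inclusion--exclusion (the relevant Bonferroni inequality),
$$|S_1\cup\ldots\cup S_r|\ge\sum_{i=1}^r|S_i|-\sum_{1\le i<j\le r}|S_i\cap S_j|=\sum_{i=1}^r m_i-\sum_{1\le i<j\le r}|S_i\cap S_j|.$$
Alternatively, and perhaps more transparently, I would count incidences directly: if a line $L$ meets exactly $c_L$ of the points $P_1,\ldots,P_r$, then it contributes $c_L$ to $\sum_i m_i$ and $\binom{c_L}{2}$ to $\sum_{i<j}|S_i\cap S_j|$, and since $c_L-\binom{c_L}{2}\le 1$ for every integer $c_L\ge 1$, summing over all lines gives $\sum_i m_i-\sum_{i<j}|S_i\cap S_j|\le|S_1\cup\ldots\cup S_r|$.

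Since all the relevant lines live in $\mathcal{L}$, the union has at most $d$ elements, so $|S_1\cup\ldots\cup S_r|\le d$. Combining this with $|S_i\cap S_j|\le 1$, which forces $\sum_{i<j}|S_i\cap S_j|\le\binom{r}{2}$, yields
$$\sum_{i=1}^r m_i\le d+\sum_{1\le i<j\le r}|S_i\cap S_j|\le d+\binom{r}{2},$$
as desired. The two displayed special cases then follow immediately by setting $r=3$ and $r=4$, since $\binom{3}{2}=3$ and $\binom{4}{2}=6$.

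I do not anticipate a serious obstacle here: the argument is a clean double count, and the only place demanding care is the bookkeeping that ensures each configuration line contributes at most $1$ to the quantity $\sum_i m_i-\sum_{i<j}|S_i\cap S_j|$. The ordering hypothesis $m_1\ge\ldots\ge m_s$ plays no role in establishing the inequality itself; it is merely a normalization, ensuring that the partial sums $m_1+\ldots+m_r$ are the largest possible, which is exactly what makes the estimate useful in the subsequent combinatorial applications.
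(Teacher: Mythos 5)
Your proof is correct and is essentially the same argument as the paper's, which simply counts lines and notes that at most $\binom{r}{2}$ of them (those joining pairs among $P_1,\ldots,P_r$) are counted more than once; your version with the sets $S_i$ and the bound $|S_i\cap S_j|\le 1$ is just a more formal writing of that double count. The only difference is that you carefully handle lines through three or more of the chosen points via $c_L-\binom{c_L}{2}\le 1$, a detail the paper's informal phrasing glosses over.
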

\begin{proof}
We count lines. For $r=1$ we have obviously $m_1\leq d$.
For $r=2$ we have $m_1+m_2\leq d+1$ since at most one line is counted twice. For $r=3$ we have $m_1+m_2+m_3\leq d+3$ since at most three lines are counted twice (lines through pairs of points $P_1$, $P_2$, $P_3$). The general case follows in the same manner.
\end{proof}
We omit the proofs of the following simple facts
\begin{lemma}[Two Pencils Inequality]\label{boundary_of_points}
Let $\mathcal{L}=\{L_1,\ldots,L_d\}$ be a configuration of lines, let  $\{P_1,\ldots,P_s\}$, where $s\geq 2$, be the set of singular points of the configuration. Let $m_1,\ldots,m_s$ be the multiplicities of points $P_1,\ldots,P_s$ respectively. Without loss of generality we can assume that
\[
m_1\geq m_2\geq\ldots\geq m_s.
\]
\begin{description}
\item[a)] If points $P_1$ and $P_2$ do not lie on a configuration line, then $$m_1m_2+2\leq s,$$
\item[b)] If points $P_1$ and $P_2$ lie on a configuration line, then $$(m_1-1)(m_2-1)+2\leq s.$$
\end{description}
\end{lemma}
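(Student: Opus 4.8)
The plan is to prove both parts by a single counting argument: I will realize $s$ as at least the number of intersection points produced by the two pencils of configuration lines centered at $P_1$ and $P_2$ — which is exactly what the name ``Two Pencils Inequality'' suggests. Throughout I use the elementary fact that any two distinct lines in $\mathbb{P}^2$ meet in precisely one point, and that any point where two distinct configuration lines meet is by definition a singular point.

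For part \textbf{a)}, I would first record that, since no configuration line passes through both $P_1$ and $P_2$, the $m_1$ lines through $P_1$ and the $m_2$ lines through $P_2$ form two \emph{disjoint} families; label them $\ell_1,\dots,\ell_{m_1}$ and $\ell'_1,\dots,\ell'_{m_2}$. For each pair $(i,j)$ set $Q_{ij}=\ell_i\cap\ell'_j$, a well-defined point of $\mathbb{P}^2$ because $\ell_i\neq\ell'_j$, and necessarily a singular point of $\mathcal{L}$. The configuration therefore contains all of the $Q_{ij}$ together with $P_1$ and $P_2$.

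The crux — and the step I expect to require the most care — is to verify that the $m_1 m_2$ points $Q_{ij}$ are pairwise distinct and distinct from $P_1$ and $P_2$. For distinctness from $P_1$: if $Q_{ij}=P_1$, then $\ell'_j$ would pass through both $P_1$ and $P_2$, contradicting the hypothesis of a); symmetrically $Q_{ij}\neq P_2$. For pairwise distinctness: if $Q_{ij}=Q_{i'j'}$ with $i\neq i'$, then $\ell_i$ and $\ell_{i'}$ share the two distinct points $P_1$ and $Q_{ij}$ and hence coincide, a contradiction; the case $j\neq j'$ is dispatched identically using $P_2$. Counting the $m_1 m_2$ points $Q_{ij}$ together with $P_1$ and $P_2$ then gives $s\ge m_1 m_2+2$.

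For part \textbf{b)}, let $L$ be the configuration line through both $P_1$ and $P_2$. I would simply discard $L$ from each pencil, leaving $m_1-1$ lines through $P_1$ and $m_2-1$ lines through $P_2$; these two families are again disjoint, since any line common to both would join $P_1$ and $P_2$ and hence equal $L$. Running the identical grid argument on these smaller pencils produces $(m_1-1)(m_2-1)$ pairwise distinct singular points $Q_{ij}$, and none of them equals $P_1$ or $P_2$ (a point $Q_{ij}=P_1$ would force $\ell'_j$ through both $P_1$ and $P_2$, so $\ell'_j=L$, which was removed). Adjoining $P_1$ and $P_2$ yields $s\ge (m_1-1)(m_2-1)+2$, completing the proof. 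The only subtlety relative to part a) is the bookkeeping of removing $L$, which reduces each multiplicity by exactly one.
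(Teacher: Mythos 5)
Your proof is correct and complete: the grid of intersection points of the two pencils is counted carefully, with the distinctness of the $Q_{ij}$ from each other and from $P_1,P_2$ properly justified, and the removal of the common line in part b) is handled correctly. The paper explicitly omits its own proof of this lemma, but your argument is precisely the one its name (``Two Pencils Inequality'') indicates, so there is nothing divergent to compare.
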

\begin{lemma}[The Pencil Criterion]\label{SPC}
If the union of lines in pencils through points $P_1,\ldots, P_r$ gives the whole configuration, then all configuration points (apart from $P_1,\ldots, P_r$) have multiplicity at most $r$.
\end{lemma}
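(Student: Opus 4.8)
The plan is to fix an arbitrary configuration point $Q\in\mathcal{P}(\mathcal{L})$ with $Q\notin\{P_1,\ldots,P_r\}$ and to bound from above the number $m_{\mathcal{L}}(Q)$ of configuration lines passing through $Q$. The entire argument will rest on the elementary projective fact that two distinct lines in $\mathbb{P}^2$ meet in exactly one point.

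First I would unwind the hypothesis: saying that the pencils through $P_1,\ldots,P_r$ exhaust the whole configuration means precisely that every line $L\in\mathcal{L}$ passes through at least one of the points $P_1,\ldots,P_r$. In particular this applies to each of the $m_{\mathcal{L}}(Q)$ lines through $Q$. The key step is then to build a map $\varphi$ from the set of configuration lines through $Q$ to the set $\{P_1,\ldots,P_r\}$, sending each such line $L$ to some $P_i\in L$ (which exists by the previous sentence; if several indices work, choose one arbitrarily). I claim $\varphi$ is injective. Indeed, if two distinct lines $L\neq L'$ through $Q$ satisfied $\varphi(L)=\varphi(L')=P_i$, then $L$ and $L'$ would share the two points $Q$ and $P_i$; since distinct lines meet in a single point, this would force $Q=P_i$, contradicting the assumption $Q\notin\{P_1,\ldots,P_r\}$.

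Injectivity of $\varphi$ immediately yields $m_{\mathcal{L}}(Q)\leq r$, which is exactly the claim. I do not expect any genuine obstacle here: the statement is purely combinatorial and the proof is essentially a one-line pigeonhole argument. The only point demanding a little care is the requirement $Q\neq P_i$ for every $i$, since this is what turns the coincidence of two points lying on two lines into a contradiction rather than a harmless equality; this is precisely why the conclusion must be stated for configuration points \emph{other} than $P_1,\ldots,P_r$.
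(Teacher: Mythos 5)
Your proof is correct: every configuration line through $Q$ must contain one of the $P_i$, and since two distinct lines through $Q$ cannot share a second point $P_i\neq Q$, the assignment of lines to pencil centers is injective, giving $m_{\mathcal{L}}(Q)\leq r$. The paper explicitly omits the proof of this lemma as a "simple fact," and your pigeonhole argument is exactly the intended one-line justification.
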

For configurations of lines in $\mathbb{P}^2(\mathbb{C})$ the following inequality due to Hirzebruch is extremely useful, see \cite{H}.
\begin{theorem}\label{hirzebruch}
Let $\mathcal{L}$ be a configuration of $d$ lines in the complex projective plane $\mathbb{P}^2$. Then
\[
t_2+\frac{3}{4}t_3\geq d+\sum_{k\geq 5}(k-4)t_k
\]
provided $t_d=t_{d-1}=0$.
\end{theorem}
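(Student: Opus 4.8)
The plan is to translate this purely combinatorial inequality into a statement about the Chern numbers of an algebraic surface and then invoke the Bogomolov--Miyaoka--Yau inequality $c_1^2\leq 3c_2$. Concretely, I would attach to the arrangement $\mathcal{L}$ a smooth projective surface $Y$, realized as a desingularized abelian cover of $\mathbb{P}^2$ branched along the lines, compute $c_1^2(Y)$ and $c_2(Y)$ in terms of the discrete data $d$ and the $t_k$, and read off the asserted inequality from $c_1^2(Y)\leq 3c_2(Y)$. This is exactly Hirzebruch's original strategy, and the combinatorial shape of the conclusion reflects the fact that the BMY bound is linear in the Chern numbers.

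First I would arrange the branch locus to have normal crossings. Let $\ell_1,\dots,\ell_d$ be linear forms defining the lines and let $\sigma\colon X'\to\mathbb{P}^2$ be the blow-up of all points of multiplicity $\geq 3$; on $X'$ the total transform $D=\sum_i\widetilde L_i+\sum_p E_p$ is a normal crossings divisor, since at each former $k$-fold point the $k$ proper transforms now meet the exceptional curve $E_p$ transversally and pairwise disjointly. Over $X'$ I would build the Kummer cover $\pi\colon Y\to X'$ of exponent $n$, with Galois group $(\mathbb{Z}/n\mathbb{Z})^{d-1}$, ramified to order $n$ along each $\widetilde L_i$. Because $D$ is normal crossings this cover is smooth after resolving the remaining cyclic quotient singularities lying over the exceptional curves, and its Chern numbers are governed by the standard formulas for the (logarithmic) Chern classes of abelian covers.

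The heart of the computation is the bookkeeping of these Chern numbers as polynomials in $n$. The relevant invariants of the pair $(\mathbb{P}^2,\mathcal{L})$ are the logarithmic self-intersection $(K_{X'}+D)^2$ and the topological Euler characteristic of the complement, for which one has the stratification formula
$$e\Big(\mathbb{P}^2\setminus\textstyle\bigcup_i L_i\Big)=3-2d+\sum_{k\geq 2}(k-1)t_k .$$
Feeding the local contribution of each $k$-fold point into the cover and extracting the leading coefficients in $n$ — equivalently, applying the orbifold version of the Bogomolov--Miyaoka--Yau inequality to the pair with all branch weights equal — turns $c_1^2(Y)\leq 3c_2(Y)$ into a linear inequality among $d$ and the $t_k$. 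Carrying the coefficients through carefully is meant to produce exactly $t_2+\tfrac{3}{4}t_3\geq d+\sum_{k\geq 5}(k-4)t_k$.

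The main obstacle is twofold. The delicate part of the calculation is correctly accounting for the exceptional divisors over the high-multiplicity points and the resolution of the quotient singularities of $Y$, where sign and multiplicity errors are easy to make; this is where I would spend the most care. The conceptually essential point, however, is to justify that $c_1^2(Y)\leq 3c_2(Y)$ actually applies, i.e.\ that $Y$ (or its minimal model) is of general type and not one of the degenerate cases. This is precisely where the hypothesis $t_d=t_{d-1}=0$ enters: it excludes pencils and near-pencils, for which a single line meets all (respectively all but one) of the singular points and the cover degenerates to a rational surface, so that the Miyaoka--Yau bound would fail. Verifying that these are the \emph{only} excluded cases, and hence that the stated hypotheses suffice, is the crux that makes the reduction to Bogomolov--Miyaoka--Yau legitimate.
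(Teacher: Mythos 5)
First, a point of reference: the paper does not prove this statement at all --- Theorem \ref{hirzebruch} is quoted from Hirzebruch \cite{H} and used as a black box. The strategy you describe (pass to a normal crossings model by blowing up the points of multiplicity $\geq 3$, build an abelian Kummer cover of exponent $n$ with group $(\mathbb{Z}/n\mathbb{Z})^{d-1}$ branched along the arrangement, compute Chern numbers, and invoke $c_1^2\leq 3c_2$) is precisely the strategy of that reference, so your outline identifies the correct --- and essentially the only known --- route to this inequality. The one formula you actually write down, $e(\mathbb{P}^2\setminus\bigcup_i L_i)=3-2d+\sum_{k\geq 2}(k-1)t_k$, is correct.

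That said, what you have written is a plan rather than a proof, and the two steps you defer are exactly where the content of the theorem lives. First, you never fix the exponent $n$, and the specific constants in the conclusion --- the coefficient $1$ on $t_2$, the $\tfrac34$ on $t_3$, the \emph{absence} of a $t_4$-term, and the coefficients $k-4$ for $k\geq 5$ --- are not generic consequences of ``BMY is linear in the Chern numbers''; they emerge only after writing $c_1^2(Y)$ and $c_2(Y)$ as explicit polynomials in $n$, $d$ and the $t_k$ and evaluating at a well-chosen exponent. Without that computation the stated inequality is asserted, not derived. Second, the applicability of $c_1^2\leq 3c_2$ is not settled by observing that pencils and near-pencils are the configurations with $t_d+t_{d-1}>0$. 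One must show that the minimal model of the desingularized cover has nonnegative Kodaira dimension and control the change of $c_1^2$ under passing to that minimal model; in \cite{H} this verification under the sole hypothesis $t_d=t_{d-1}=0$ is a genuine argument, with some small arrangements treated separately, not a remark. Since you explicitly acknowledge both points but supply neither, the proposal as written does not yet prove the theorem, although it is a faithful summary of how the cited proof goes.
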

\section{Harbourne constants}
Let $\mathbb{K}$ be an arbitrary field.
\begin{definition}\label{Hconstants}
Let $\mathcal{L}=\{L_1,\ldots,L_d\}$ be a configuration of lines in the projective plane $\mathbb{P}^2(\mathbb{K})$, let  $\mathcal{P}(\mathcal{L})=\{P_1,\ldots,P_s\}$ be the set of all singular points of the configuration. Then the \emph{linear Harbourne constant of $\mathcal{L}$ at $\mathcal{P}$} is defined as
\begin{equation}\label{l-H-constant}
H_{L}(\mathbb{K},\mathcal{L})=\frac{d^2-\sum_{k=1}^s m_{\mathcal{L}}(P_k)^2}{s}
\end{equation}
Similarly, we define the \emph{linear Harbourne constant of configurations of $d$ $\mathbb{K}$-lines} as the minimum
\[
H_{L}(\mathbb{K},d):=\min H_{L}(\mathbb{K},\mathcal{L})
\]
where the minimum is taken over all configurations of $d$ lines in $\mathbb{P}^2(\mathbb{K})$.\\
Going over all fields $\mathbb{K}$, we introduce the \emph{absolute linear Harbourne constant} as
\[
H_{L}(d):=\min_{\mathbb{K}} H_{L}(\mathbb{K},d).
\]
\end{definition}
Motivated by \eqref{l-H-constant}, it is convenient for a solution $T=(t_2,\ldots,t_d)$ of equation \eqref{eq: combinatorial}, to define the combinatorial quotient $q(T)$ associated to $T$ as
\begin{equation}\label{combin-equotient}
q(T)=\frac{d^2-\sum_{k=2}^d k^2t_k}{\sum_{k=2}^d t_k}.
 \end{equation}
Note that if there exists a configuration of lines $\mathcal{L}$ with $T=(t_2(\mathcal{L}),\ldots, t_d(\mathcal{L}))$, then we have $q(T)=H_L(\mathbb{K},\mathcal{L})$. The point is that not every solution of \eqref{eq: combinatorial} can be realized in the geometrical way.
\begin{example}\label{example}
Let $\mathbb{K}$ be a field.
\begin{enumerate}
\item In the case when $d$ lines meet in one point (i.e. they belong to the same pencil) we have
\[
H_{L}(\mathbb{K},\mathcal{L})=0.
\]
\item For a configuration of $d$ general lines we have
\[
H_{L}(\mathbb{K},\mathcal{L})=-2+\frac{2}{d-1},
\]
hence in this case the constant is always greater then $-2$.
\end{enumerate}
\end{example}
As the main result of this note we establish the values of the absolute linear Harbourne constants of up to ten lines. Since the case $\mathbb{K}=\mathbb{C}$ is of particular interest from the point of view of BNC, we compute separately also the complex Harbourne constants.
\begin{theorem}\label{main}
The values of the absolute Harbourne constants are
\begin{center}
%\medskip\noindent
\begin{tabularx}{1.05\textwidth}{|c|Y|Y|Y|Y|Y|Y|Y|Y|Y|}
\hline
   $d$ & $2$ & $3$ & $4$ & $5$ & $6$ & $7$ & $8$ & $9$ & $10$\rule{0pt}{2.6ex}\\
\hline
   $H_{L}(d)$ & $0$ & $-1$ & $-1\frac{1}{3}$ & $-1,5$ & $-1\frac{5}{7}$ & $\mathbf{-2}$ & $-2$ & $-2,25$ & $\mathbf{-2\frac{5}{12}}$\rule{0pt}{2.6ex}\\
\hline
\end{tabularx}
\end{center}
Over $\mathbb{C}$ we obtain the following values
   \begin{center}
%   \medskip\noindent
   \begin{tabularx}{1.05\textwidth}{|c|Y|Y|Y|Y|Y|Y|Y|Y|Y|}
   \hline
   $d$ & $2$ & $3$ & $4$ & $5$ & $6$ & $7$ & $8$ & $9$ & $10$\rule{0pt}{2.6ex}\\
  \hline
   $H_{L}(\mathbb{C},d)$ & $0$ & $-1$ & $-1\frac{1}{3}$ & $-1,5$ & $-1\frac{5}{7}$ & $\mathbf{-1\frac{8}{9}}$ & $-2$ & $-2,25$ & $\mathbf{-2\frac{4}{15}}$\rule{0pt}{2.6ex}  \\
   \hline
   \end{tabularx}
   \end{center}
\end{theorem}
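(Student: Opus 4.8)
The plan is to reduce the computation of each $H_L(d)$ and $H_L(\mathbb{C},d)$ to a constrained combinatorial optimization and then settle the delicate question of geometric realizability. Recall that whenever a solution $T=(t_2,\dots,t_d)$ of \eqref{eq: combinatorial} is realized by an actual configuration $\mathcal{L}$, we have $H_L(\mathbb{K},\mathcal{L})=q(T)$. Writing \eqref{eq: combinatorial} as $\sum_{k\geq 2}k(k-1)t_k=d(d-1)$ and substituting into \eqref{combin-equotient}, the numerator $d^2-\sum_k k^2t_k$ collapses to $d-\sum_k k\,t_k$, giving the convenient identity
\begin{equation}\label{eq: q-simplified}
q(T)=\frac{d-\sum_{k\geq 2}k\,t_k}{\sum_{k\geq 2}t_k}.
\end{equation}
Thus, by \eqref{eq: q-simplified}, minimizing $q$ amounts to making the total number of incidences $\sum_k k\,t_k$ large while keeping the number of points $s=\sum_k t_k$ small, subject to \eqref{eq: combinatorial}. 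For each fixed $d\in\{2,\dots,10\}$ I would carry out the argument in two halves: first a \emph{lower bound}, proving $q(T)\geq c_d$ for every $T$ that is geometrically realizable (over an arbitrary field, resp.\ over $\mathbb{C}$), and second an \emph{achievability} step exhibiting an explicit configuration with $q(T)=c_d$.

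For the lower bound I would start from the two baseline configurations of Example \ref{example}: the pencil gives $0$ and $d$ general lines give $-2+\frac{2}{d-1}$. Any value below the generic one forces singular points of multiplicity $\geq 3$, and this is where the geometry enters. Since $d\leq 10$, the admissible solutions $T$ of \eqref{eq: combinatorial} with small $q(T)$ form a finite list that can be enumerated; the task is to discard those that cannot come from a line configuration. This is precisely the role of the Parity Criterion (Lemma \ref{lem: points on one line}), the Triangular and Quadrangle Inequalities, the Two Pencils Inequality (Lemma \ref{boundary_of_points}) and the Pencil Criterion (Lemma \ref{SPC}): each bounds how many high-multiplicity points can coexist, and together they eliminate the combinatorially optimal but geometrically impossible solutions, pinning down the realizable minimum $c_d$.

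The separation between the absolute and the complex constants is governed by Hirzebruch's inequality (Theorem \ref{hirzebruch}). Over an arbitrary field certain highly symmetric configurations realizable only in positive characteristic achieve smaller quotients; the archetype is the Fano plane for $d=7$ (seven triple points, realizable over $\mathbb{F}_2$), which by \eqref{eq: q-simplified} yields $q(T)=-2$ and accounts for the bold entry $H_L(7)=-2$. Over $\mathbb{C}$ this configuration violates Theorem \ref{hirzebruch} (here $\tfrac34 t_3=\tfrac{21}{4}<7=d$), so it is excluded and the complex optimum drops to the configuration with six triple and three double points, giving $H_L(\mathbb{C},7)=-\tfrac{17}{9}$; the same mechanism produces the discrepancy at $d=10$. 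For the achievability half I would display the extremal configurations explicitly: pencils and near-pencils for small $d$, the dual Hesse configuration (nine lines, twelve triple points, Hirzebruch-extremal and realizable over $\mathbb{C}$) realizing $H_L(9)=H_L(\mathbb{C},9)=-\tfrac{9}{4}$, and the appropriate configurations over finite fields, resp.\ over $\mathbb{C}$, for $d=7,10$, checking in each case that \eqref{eq: combinatorial} and \eqref{l-H-constant} return the tabulated value.

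The main obstacle is the realizability analysis, i.e.\ matching the combinatorial optimum to an actual configuration. Proving the lower bound requires ruling out, for each $d$, every solution $T$ with $q(T)<c_d$; since these are exactly the solutions with several large $t_k$, no single inequality suffices and one must combine the lemmas above in a case-by-case fashion, with Hirzebruch's inequality supplying the extra constraint needed to raise the bound over $\mathbb{C}$. The complementary difficulty is constructing each extremal configuration over the correct field and verifying that the claimed singular points genuinely occur — which is exactly what distinguishes a merely combinatorial solution of \eqref{eq: combinatorial} from an actual Harbourne constant.
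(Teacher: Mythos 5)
Your proposal follows essentially the same route as the paper: enumerate the solutions $T$ of \eqref{eq: combinatorial} with small $q(T)$, eliminate the unrealizable ones via the Parity Criterion, the Triangular/Quadrangle and Two Pencils Inequalities and the Pencil Criterion, use Hirzebruch's inequality to separate the complex from the absolute constants, and realize the minima by the same extremal configurations (Fano plane, M\"obius--Kantor, dual Hesse, and the $\mathbb{F}_3$-constructions); the values you compute all agree with the tables. Your simplified identity $q(T)=\bigl(d-\sum_k k\,t_k\bigr)/\sum_k t_k$ is a pleasant reformulation not made explicit in the paper, but otherwise the argument is the paper's, with the case-by-case exclusions for $d=8,9,10$ left to be carried out as described.
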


\section{Proof of the main Theorem}

\begin{proof}
Our approach is the following. We find the list of all solutions $T=(t_2,\ldots,t_d)$ for equation \eqref{eq: combinatorial}. Then we compute the corresponding quotients $q(T)$. Then we take the minimal obtained quotients and discuss if there exists a geometric configuration with invariants $(t_2,\ldots,t_d)$ either over $\mathbb{C}$ or over an arbitrary field.\\
Since any two lines belong to one pencil,
\[
H_{L}(2)=0,
\]
by Example \ref{example}.\\
Three lines either meet in one triple point (see Figure~\ref{fig: d=3s}), or are general (see Figure~\ref{fig: d=3}). So the minimal value of the Harbourne's constant is $-1$.
\begin{figure}[H]
\centering
\begin{minipage}{0.4\textwidth}
   \centering
%\definecolor{qqqqff}{rgb}{0.0,0.0,1.0}
\begin{tikzpicture}[line cap=round,line join=round,>=triangle 45,x=1.0cm,y=1.0cm,scale=0.5]
\clip(-4.3,-1.92) rectangle (7.06,6.3);
\draw [domain=-4.3:7.06] plot(\x,{(--9.5112--1.5*\x)/4.56});
\draw [domain=-4.3:7.06] plot(\x,{(--13.8448-2.32*\x)/5.08});
\draw [domain=-4.3:7.06] plot(\x,{(--1.09380350785--0.986424897798*\x)/0.805931689305});
\end{tikzpicture}
 \caption{$ $  $d=3, t_2=0,t_3=1$}\label{fig: d=3s}
   \end{minipage}
   \quad
   \begin{minipage}{0.4\textwidth}
   \centering
\begin{tikzpicture}[line cap=round,line join=round,>=triangle 45,x=1.0cm,y=1.0cm,scale=0.5]
\clip(-5.22,-0.7199999999999988) rectangle (6.140000000000001,7.5);
\draw [domain=-5.22:6.140000000000001] plot(\x,{(--10.8936--0.040000000000000036*\x)/6.279999999999999});
\draw [domain=-5.22:6.140000000000001] plot(\x,{(--12.8432--3.5200000000000005*\x)/2.76});
\draw [domain=-5.22:6.140000000000001] plot(\x,{(--20.0456-3.4800000000000004*\x)/3.52});
\begin{scriptsize}
\draw [fill=black] (-2.3,1.72) circle (1.5pt);
\draw [fill=black] (3.98,1.76) circle (1.5pt);
\draw [fill=black] (0.46,5.24) circle (1.5pt);
\end{scriptsize}
\end{tikzpicture}
\caption{$ $  $d=3, t_2=3, t_3=0$}\label{fig: d=3}
\end{minipage}
\end{figure}

If we have $4$ lines there are four solutions for equation \eqref{eq: combinatorial}
\begin{enumerate}
\item[\rm(i)] $T=(6,0,0)$, and $q(T)=-1 \frac13$,
\item[\rm(ii)] $T=(3,1,0)$, and $q(T)=-1 \frac14$,
\item[\rm(iii)] $T=(0,2,0)$, and $q(T)=-1$,
\item[\rm(iv)] $T=(0,0,1)$, and $q(T)=0$.
\end{enumerate}
Case (iii) is excluded by Lemma \ref{lem: points on one line}. Cases (i), (ii) and (iv) are geometrically realizable. The minimal value of the constant is obtained for a configuration of general lines, see Figure~\ref{fig: d=4}.
\begin{figure}[H]
\centering
\begin{tikzpicture}[line cap=round,line join=round,>=triangle 45,x=1.0cm,y=1.0cm,scale=0.5]
\clip(-4.54,-0.7999999999999987) rectangle (6.82,7.42);
\draw [domain=-4.54:6.82] plot(\x,{(--12.8112-2.04*\x)/4.08});
\draw [domain=-4.54:6.82] plot(\x,{(--9.7696--1.1600000000000001*\x)/2.24});
\draw [domain=-4.54:6.82] plot(\x,{(--12.344-3.2*\x)/1.8399999999999999});
\draw [domain=-4.54:6.82] plot(\x,{(-1.1989213175760627-3.2422006537591153*\x)/-1.7951460900176015});
\begin{scriptsize}
\draw [fill=qqqqff] (-1.2,3.74) circle (1.5pt);
\draw[color=qqqqff] (-1.06,4.0200000000000005);% node {$A$};
\draw [fill=qqqqff] (2.88,1.7) circle (1.5pt);
\draw[color=qqqqff] (3.02,1.9800000000000009);% node {$B$};
\draw[color=black] (-4.380000000000001,5.140000000000001);% node {$a$};
\draw [fill=qqqqff] (1.04,4.9) circle (1.5pt);
\draw[color=qqqqff] (1.1800000000000002,5.180000000000001);% node {$C$};
\draw[color=black] (-4.380000000000001,2.440000000000001);% node {$b$};
\draw[color=black] (-0.08000000000000002,7.28); %node {$c$};
\draw [fill=xdxdff] (2.8671460900176013,5.846200653759115) circle (1.5pt);
\draw[color=xdxdff] (3.0,6.12); %node {$D$};
\draw [fill=xdxdff] (1.0719999999999998,2.6039999999999996) circle (1.5pt);
\draw[color=xdxdff] (1.22,2.880000000000001); %node {$E$};
\draw[color=black] (3.3600000000000003,7.28); %node {$d$};
\draw [fill=uuuuuu] (1.7039341825020893,3.745331856518105) circle (1.5pt);
\draw[color=uuuuuu] (1.84,4.0200000000000005); %node {$F$};
\end{scriptsize}
\end{tikzpicture}
\caption{$ $  $d=4$}\label{fig: d=4}
\end{figure}

In the case of $d=5$ we have the following candidates for the minimal value of the Harbourne constant
\begin{enumerate}%[\rm(i)]
\item[\rm(i)] $T=(10,0,0,0)$, and $q(T)=-1,5$,
\item[\rm(ii)] $T=(7,1,0,0)$, and $q(T)=-1,5$,
\item[\rm(iii)] $T=(4,2,0,0)$, and $q(T)=-1,5$,
\item[\rm(iv)] $T=(1,3,0,0)$, and $q(T)=-1,5$.
\end{enumerate}
Case (iv) is excluded by Lemma \ref{lem: multiplicity}. Cases (i), (ii) and (iii) are realizable,  see Figure~\ref{fig: d=5}.
\begin{figure}[H]
\centering
\begin{minipage}{0.4\textwidth}
   \centering
\begin{tikzpicture}[line cap=round,line join=round,>=triangle 45,x=1.0cm,y=1.0cm,scale=0.5]
\clip(-4.82,-1.6599999999999986) rectangle (6.54,6.5600000000000005);
\draw [domain=-4.82:6.54] plot(\x,{(--6.9376--0.5*\x)/2.06});
\draw [domain=-4.82:6.54] plot(\x,{(-2.144-1.46*\x)/-0.78});
\draw [domain=-4.82:6.54] plot(\x,{(--2.176-0.96*\x)/1.2799999999999998});
\draw [domain=-4.82:6.54] plot(\x,{(--6.3134503743148-4.325271730618638*\x)/0.24408770555990578});
\draw [domain=-4.82:6.54] plot(\x,{(-6.539890561042947-3.27805022397468*\x)/-3.934211057466365});
\begin{scriptsize}
\draw [fill=black] (-1.68,2.96) circle (1.5pt);
\draw[color=black] (-1.66,3.420000000000001); %node {$P_1$};
\draw [fill=black] (0.38,3.46) circle (1.5pt);
\draw[color=black] (0.16,3.8200000000000003); %node {$P_2$};
\draw[color=black] (-4.62,2.620000000000001); %node {$l_1$};
\draw [fill=black] (-0.4,2.0) circle (1.5pt);
\draw[color=black] (-0.12,2.2600000000000007);% node {$P_{10}$};
\draw[color=black] (1.7000000000000002,6.36);% node {$l_4$};
\draw[color=black] (-4.62,5.0600000000000005); %node {$l_2$};
\draw [fill=black] (1.179912294440094,4.957271730618638) circle (1.5pt);
\draw[color=black] (1.4400000000000002,5.12); %node {$P_5$};
\draw [fill=black] (1.4239999999999997,0.6319999999999997) circle (1.5pt);
\draw[color=black] (1.6,0.980000000000001); %node {$P_7$};
\draw[color=black] (0.86,6.4399999999999995); %node {$l_3$};
\draw [fill=black] (2.8881609400035613,4.06877692718533) circle (1.5pt);
\draw[color=black] (2.7600000000000002,4.5); %node {$P_4$};
\draw [fill=black] (-1.0460501174628036,0.79072670321065) circle (1.5pt);
\draw[color=black] (-0.8200000000000001,0.860000000000001); %node {$P_9$};
\draw[color=black] (-3.48,-1.3399999999999987); %node {$l_5$};
\draw [fill=black] (1.2524573879593537,3.6717615019318814) circle (1.5pt);
\draw[color=black] (1.4200000000000002,4.140000000000001); %node {$P_3$};
\draw [fill=black] (1.3045169330401387,2.7492583462310622) circle (1.5pt);
\draw[color=black] (1.6,2.8400000000000007); %node {$P_6$};
\draw [fill=black] (0.023804009489837942,1.6821469928826218) circle (1.5pt);
\draw[color=black] (0.1,1.620000000000001); %node {$P_8$};
\end{scriptsize}
\end{tikzpicture}
 \end{minipage}
   \quad
   \begin{minipage}{0.4\textwidth}
   \centering
  \begin{tikzpicture}[line cap=round,line join=round,>=triangle 45,x=1.0cm,y=1.0cm,scale=0.5]
\clip(-4.3,-1.9199999999999986) rectangle (7.0600000000000005,6.3);
\draw [domain=-4.3:7.0600000000000005] plot(\x,{(--10.016--1.6*\x)/3.0});
\draw [domain=-4.3:7.0600000000000005] plot(\x,{(--4.774400000000001-1.9400000000000002*\x)/0.9600000000000001});
\draw [domain=-4.3:7.0600000000000005] plot(\x,{(--5.888701989242723-1.5015487065653548*\x)/4.534786988815846});
\draw [domain=-4.3:7.0600000000000005] plot(\x,{(--2.581050519031143-3.2593079584775086*\x)/-1.5137024221453284});
\draw [domain=-4.3:7.0600000000000005] plot(\x,{(--12.94--0.8999999999999999*\x)/5.199999999999999});
\begin{scriptsize}
\draw [fill=black] (-2.36,2.08) circle (1.5pt);
\draw [fill=black] (1.6,1.74) circle (1.5pt);
\draw [fill=black] (0.64,3.68) circle (1.5pt);
\draw [fill=black] (2.1747869888158458,0.5784512934346454) circle (1.5pt);
\draw [fill=black] (3.1137024221453284,4.999307958477509) circle (1.5pt);
\draw [fill=black] (1.2090574675932202,0.8982215311680101) circle (1.5pt);
\draw [fill=black] (1.1326223520818117,2.684492330168006) circle (1.5pt);
\draw [fill=black] (2.117838154647017,2.8550104498427538) circle (1.5pt);
\end{scriptsize}
\end{tikzpicture}
\end{minipage}
\quad
   \begin{minipage}{0.4\textwidth}
   \centering
\begin{tikzpicture}[line cap=round,line join=round,>=triangle 45,x=1.0cm,y=1.0cm,scale=0.5]
\clip(-4.3,-1.9199999999999986) rectangle (7.0600000000000005,6.3);
\draw [domain=-4.3:7.0600000000000005] plot(\x,{(--11.0656--0.16000000000000014*\x)/4.46});
\draw [domain=-4.3:7.0600000000000005] plot(\x,{(--11.9848--1.48*\x)/3.5999999999999996});
\draw [domain=-4.3:7.0600000000000005] plot(\x,{(--5.9079999999999995-1.4*\x)/3.78});
\draw [domain=-4.3:7.0600000000000005] plot(\x,{(-5.105599999999999--1.3199999999999998*\x)/-0.8600000000000001});
\draw [domain=-4.3:7.0600000000000005] plot(\x,{(-1.6911999999999998--1.56*\x)/0.6800000000000002});
\begin{scriptsize}
\draw [fill=black] (-2.26,2.4) circle (1.5pt);
\draw [fill=black] (2.2,2.56) circle (1.5pt);
\draw [fill=black] (1.34,3.88) circle (1.5pt);
\draw [fill=black] (1.52,1.0) circle (1.5pt);
\draw [fill=uuuuuu] (3.0887677889621665,4.5989378687955575) circle (1.5pt);
\draw [fill=uuuuuu] (3.755887573964496,0.17189349112426053) circle (1.5pt);
\end{scriptsize}
\end{tikzpicture}
\end{minipage}
\caption{$ $  $d=5$ (configurations computing $H_L(5)$)}\label{fig: d=5}
\end{figure}
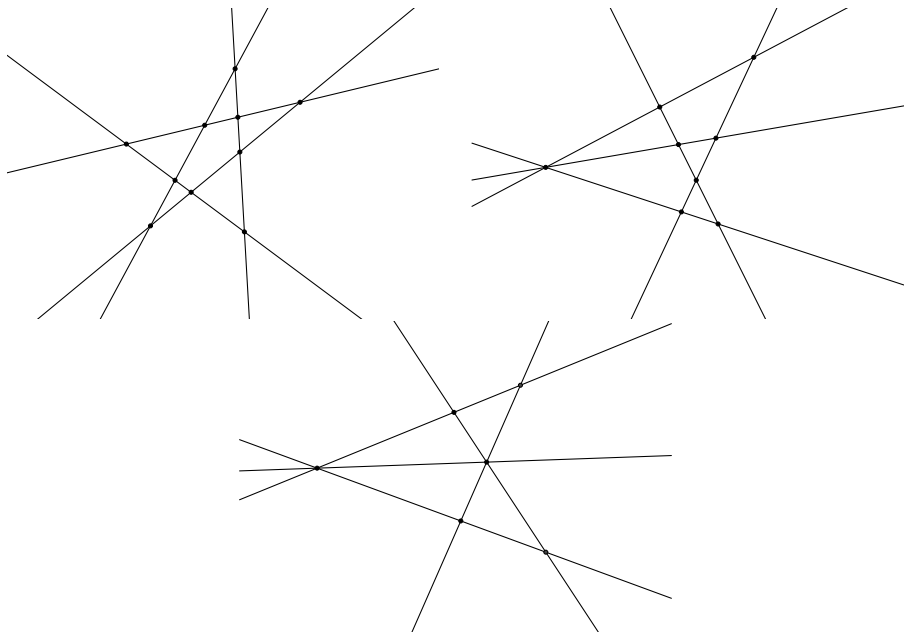

For $d=6$ combinatorial equality \eqref{eq: combinatorial} yields that there are the following candidates for the lowest value of Harbourne constant
\begin{enumerate}%[\rm(i)]
\item[\rm(i)] $T=(0,5,0,0,0)$, and $q(T)=-1,8$,
\item[\rm(ii)] $T=(0,3,1,0,0)$, and $q(T)=-1,75$,
\item[\rm(iii)] $T=(3,4,0,0,0)$, and $q(T)=-1\frac{5}{7}\simeq -1,714$.
\end{enumerate}
Case (i) is excluded by Lemma \ref{lem: points on one line}. Passing to (ii) we observe that the $4$-fold point must be contained in every line (see Lemma \ref{lem: points on one line}), which is impossible. Then the minimal value of the Harbourne constant is $-1\frac{5}{7}$, see Figure~\ref{fig: d=6}.

\begin{figure}[H]
\centering
\begin{tikzpicture}[line cap=round,line join=round,>=triangle 45,x=1.0cm,y=1.0cm,scale=0.5]
\clip(-4.3,-1.92) rectangle (7.0600000000000005,6.3);
\draw [domain=-4.3:7.0600000000000005] plot(\x,{(--8.684800000000001--0.040000000000000036*\x)/3.4800000000000004});
\draw [domain=-4.3:7.0600000000000005] plot(\x,{(--8.0032--1.1800000000000002*\x)/2.58});
\draw [domain=-4.3:7.0600000000000005] plot(\x,{(--4.752-1.32*\x)/2.64});
\draw [domain=-4.3:7.0600000000000005] plot(\x,{(--4.684800000000001-1.1400000000000001*\x)/0.9000000000000001});
\draw [domain=-4.3:7.0600000000000005] plot(\x,{(-0.7664--1.36*\x)/0.8400000000000001});
\draw [domain=-4.3:7.0600000000000005] plot(\x,{(--3.2696000000000005-2.5*\x)/0.06000000000000005});
\begin{scriptsize}
\draw [fill=black] (-1.36,2.48) circle (1.5pt);
\draw [fill=black] (2.12,2.52) circle (1.5pt);
\draw [fill=black] (1.22,3.66) circle (1.5pt);
\draw [fill=black] (1.28,1.16) circle (1.5pt);
\draw [fill=black] (3.455672068636797,4.682516682554813) circle (1.5pt);
\draw [fill=black] (4.441739130434785,-0.4208695652173919) circle (1.5pt);
\draw [fill=black] (1.2476006618863762,2.5099724214009926) circle (1.5pt);
\end{scriptsize}
\end{tikzpicture}
\caption{$ $  $d=6$}\label{fig: d=6}
\end{figure}
\subsection{Seven lines}
 The combinatorial equality \eqnref{eq: combinatorial}
   implies that for $d=7$ we consider the following two cases for minimal Harbourne constant and the following cases need to be considered:
\begin{enumerate}%[\rm(i)]
\item[\rm(i)] $T=(0,1,3,0,0,0)$ and $q(T)=-2$,
\item[\rm(ii)] $T=(0,3,2,0,0,0)$ and $q(T)=-2$,
\item[\rm(iii)] $T=(0,5,1,0,0,0)$ and $q(T)=-2$,
\item[\rm(iv)] $T=(0,7,0,0,0,0)$, and $q(T)=-2$,
\item[\rm(v)] $T=(3,6,0,0,0,0)$ and $q(T)=-1\frac{8}{9}$.
\end{enumerate}
Cases (i), (ii) and (iii) are excluded since the Parity Criterion (Lemma \ref{lem: points on one line}) yields that each configuration line must pass through an even number of $4$-fold points (we do not have double points in these configurations). It is well known that the configuration of $7$ lines with $7$ triple points exists only in characteristic $2$ (the smallest such configuration is the Fano plane $\P^2(\F_2)$) (see \cite{BGS74}). The picture below (Figure \ref{fig: d=7,Fano})
indicates collinear points as lying on the segments or on the circle (i.e. the circle is the seventh line).
\begin{figure}[H]
\centering
\begin{minipage}{0.4\textwidth}
\centering
\begin{tikzpicture}[line cap=round,line join=round,x=1.0cm,y=1.0cm,scale=0.7]
\clip(0.02,-2.74) rectangle (8,3.34);
%\fill[color=zzttqq,fill=zzttqq,fill opacity=0.1] (1.74,-1.7) -- (6.72,-1.7) -- (4.23,2.61) -- cycle;
\draw [color=uuuuuu] (1.74,-1.7)-- (6.72,-1.7);
\draw [color=uuuuuu] (6.72,-1.7)-- (4.23,2.61);
\draw
 [color=uuuuuu] (4.23,2.61)-- (1.74,-1.7);
\draw(4.23,-0.26) circle (1.44cm);
\draw (1.74,-1.7)-- (5.48,0.46);
\draw (4.23,2.61)-- (4.23,-1.7);
\draw (2.99,0.46)-- (6.72,-1.7);
\begin{scriptsize}
\fill [color=uuuuuu] (1.74,-1.7) circle (1.8pt);
\fill [color=uuuuuu] (6.72,-1.7) circle (1.8pt);
\fill [color=uuuuuu] (4.23,2.61) circle (1.8pt);
\fill [color=uuuuuu] (4.23,-1.7) circle (1.8pt);
\fill [color=uuuuuu] (2.99,0.46) circle (1.8pt);
\fill [color=uuuuuu] (5.48,0.46) circle (1.8pt);
\fill [color=uuuuuu] (4.23,-0.26) circle (1.8pt);
\end{scriptsize}
\end{tikzpicture}
\caption{$ $  $d=7$, $t_3=7$, Fano plane}
\label{fig: d=7,Fano}
\end{minipage}
\quad
\begin{minipage}{0.4\textwidth}
   \centering
\begin{tikzpicture}[line cap=round,line join=round,>=triangle 45,x=1.0cm,y=1.0cm,scale=0.5]
\clip(-4.5,-1.7799999999999987) rectangle (6.86,6.44);
\draw [domain=-4.5:6.86] plot(\x,{(--7.298400000000001--1.0*\x)/2.8200000000000003});
\draw [domain=-4.5:6.86] plot(\x,{(--7.4808--1.62*\x)/2.52});
\draw [domain=-4.5:6.86] plot(\x,{(--2.5488-0.9600000000000002*\x)/1.8});
\draw [domain=-4.5:6.86] plot(\x,{(--0.4032000000000002-2.58*\x)/-0.72});
\draw [domain=-4.5:6.86] plot(\x,{(-1.8660000000000005--0.6200000000000001*\x)/-0.30000000000000004});
\draw [domain=-4.5:6.86] plot(\x,{(--0.24239999999999973--1.9600000000000002*\x)/1.02});
\draw [domain=-4.5:6.86] plot(\x,{(--14.146650998618984-4.596468022286777*\x)/0.9973589218534213});
\begin{scriptsize}
\draw [fill=black] (-1.32,2.12) circle (1.5pt);
\draw [fill=black] (1.5,3.12) circle (1.5pt);
\draw [fill=black] (1.2,3.74) circle (1.5pt);
\draw [fill=black] (0.48,1.16) circle (1.5pt);
\draw [fill=black] (2.135684556407448,4.341511500547646) circle (1.5pt);
\draw [fill=black] (3.1330434782608694,-0.2549565217391311) circle (1.5pt);
\draw [fill=black] (0.9750247116968699,2.933838550247117) circle (1.5pt);
\draw [fill=black] (1.7998243045387998,5.889370424597367) circle (1.5pt);
\draw [fill=black] (2.336377952755906,3.416587926509187) circle (1.5pt);
\end{scriptsize}
\end{tikzpicture}
\caption{$ $  $d=7$, $t_3=6$, $t_2=3$}\label{fig: d=7,C}
\end{minipage}
\end{figure}
Case (v) is achievable over any, big enough, field, see Figure \ref{fig: d=7,C}.

\subsection{Eight lines}

Passing to the case of configurations of $8$ lines observe to begin with that there are at most two $4$-fold points or one $5$-fold point (see Lemma \ref{lem: multiplicity}). Hence from the combinatorial equality \eqnref{eq: combinatorial} there are the following candidates for the minimal Harbourne constant:
\begin{enumerate}%[\rm(i)]
\item[\rm(i)] $T=(0,6,0,1,0,0,0)$ and $q(T)=-2\frac{1}{7}$,
\item[\rm(ii)] $T=(1,5,2,0,0,0,0)$ and $q(T)=-2,125$,
\item[\rm(iii)] $T=(1,9,0,0,0,0,0)$ and $q(T)=-2,1$,
\item[\rm(iv)] $T=(1,4,0,0,1,0,0)$ and $q(T)=-2$,
\item[\rm(v)] $T=(3,5,0,1,0,0,0)$ and $q(T)=-2$,
\item[\rm(vi)] $T=(4,4,2,0,0,0,0)$ and $q(T)=-2$,
\item[\rm(vii)] $T=(4,6,1,0,0,0,0)$ and $q(T)=-2$,
\item[\rm(viii)] $T=(4,8,0,0,0,0,0)$ and $q(T)=-2$.
\end{enumerate}
Cases (i), (iii), (v) are excluded since Lemma \ref{lem: points on one line} yields that on each configuration line there  must be an odd number of points with even multiplicity. In the first case we do not have any points with even multiplicity, in the second and the third one we do not have enough even points.\\
For the cases (ii) and (vi) observe first that the two $4$-fold points must lie on the same configuration line otherwise there cannot be any triple points by Lemma \ref{SPC}, so the double point lies on this line too. Hence we have the following situation:
\begin{figure}[H]
\centering
\begin{tikzpicture}[line cap=round,line join=round,>=triangle 45,x=1.0cm,y=1.0cm,scale=0.5]
\clip(-4.126666666666669,-0.6400000000000037) rectangle (7.233333333333335,7.579999999999999);
\draw [domain=-4.126666666666669:7.233333333333335] plot(\x,{(--15.616-0.0*\x)/6.1});
\draw [domain=-4.126666666666669:7.233333333333335] plot(\x,{(--0.7119999999999997--0.8199999999999998*\x)/-0.26});
\draw [domain=-4.126666666666669:7.233333333333335] plot(\x,{(--1.563733333333329--0.8799999999999999*\x)/0.03333333333333188});
\draw [domain=-4.126666666666669:7.233333333333335] plot(\x,{(--2.7045333333333295--0.98*\x)/0.413333333333332});
\draw [domain=-4.126666666666669:7.233333333333335] plot(\x,{(-7.294266666666664--1.38*\x)/-0.4666666666666659});
\draw [domain=-4.126666666666669:7.233333333333335] plot(\x,{(-4.231866666666663--1.2199999999999998*\x)/0.45333333333333403});
\draw [domain=-4.126666666666669:7.233333333333335] plot(\x,{(-4.260266666666665--0.96*\x)/-0.0166666666666659324});
\draw (1.8200000000000018,-0.6400000000000037) -- (1.8200000000000018,7.579999999999999);
\begin{scriptsize}
\draw [fill=black] (-1.68,2.56) circle (1.5pt);
\draw[color=black] (-1.2,2.2) node {$P_1$};
\draw [fill=black] (4.42,2.56) circle (1.5pt);
\draw[color=black] (4.9,2.2) node {$P_2$};
\draw[color=black] (-2.526666666666668,6.5) node {$L_1$};
\draw[color=black] (-1.1,6.5) node {$L_2$};
\draw[color=black] (0.4,6.5) node {$L_3$};
\draw[color=black] (3.5,6.5) node {$L_4$};
\draw[color=black] (6.3,6.5) node {$L_6$};
\draw[color=black] (4.7,6.5) node {$L_5$};
\draw [fill=black] (1.8200000000000005,2.56) circle (1.5pt);
\draw[color=black] (2.2,2.2) node {$P_3$};
\draw[color=black] (1.5,5.079999999999998) node {$L_7$};
\end{scriptsize}
\end{tikzpicture}
\end{figure}
Lines $L_1,\ldots,L_6$ intersect in $9$ double points and only at most three of them are collinear. So on the line $L_7$ there could be at most three triple points which is a contradiction.

Lemma \ref{lem: multiplicity} yields that (iv) is impossible over any field.

Case (vii) is achievable over any, big enough, field, see Figure \ref{fig: d=8,C}.

\begin{figure}[H]
\centering
\begin{minipage}{0.4\textwidth}
\centering
\begin{tikzpicture}[line cap=round,line join=round,>=triangle 45,x=1.0cm,y=1.0cm,scale=0.3]
\clip(-7.130000000000002,-4.069999999999999) rectangle (9.910000000000002,8.260000000000003);
\draw [domain=-7.130000000000002:9.910000000000002] plot(\x,{(--23.014000000000003--2.4499999999999997*\x)/5.550000000000001});
\draw [domain=-7.130000000000002:9.910000000000002] plot(\x,{(--11.090800000000002-0.10999999999999988*\x)/4.91});
\draw [domain=-7.130000000000002:9.910000000000002] plot(\x,{(--11.0982-1.3900000000000001*\x)/7.130000000000001});
\draw [domain=-7.130000000000002:9.910000000000002] plot(\x,{(--1.46-3.25*\x)/6.25});
\draw [domain=-7.130000000000002:9.910000000000002] plot(\x,{(-0.7167999999999997--2.5599999999999996*\x)/0.64});
\draw [domain=-7.130000000000002:9.910000000000002] plot(\x,{(-5.639200000000001--3.14*\x)/-1.3400000000000003});
\draw [domain=-7.130000000000002:9.910000000000002] plot(\x,{(--11.796000000000001-5.7*\x)/0.7000000000000002});
\draw [domain=-7.130000000000002:9.910000000000002] plot(\x,{(-0.5244450588649355--2.140683549842573*\x)/1.4975499630607865});
\begin{scriptsize}
\draw [fill=black] (-4.07,2.35) circle (2.7pt);
\draw [fill=black] (1.48,4.8) circle (2.7pt);
\draw [fill=black] (0.84,2.24) circle (2.7pt);
\draw [fill=black] (2.18,-0.9) circle (2.7pt);
\draw [fill=black] (0.6380407890337678,1.4321631561350716) circle (2.7pt);
\draw [fill=black] (0.29946902654867247,0.07787610619469029) circle (2.7pt);
\draw [fill=black] (0.022153549010139856,4.156446161274746) circle (2.7pt);
\draw [fill=black] (1.7970189896094588,2.218559656037263) circle (2.7pt);
\draw [fill=black] (1.92439094484219,1.1813880205707368) circle (2.7pt);
\draw [fill=black] (2.18,-0.9) circle (2.7pt);
\draw [fill=black] (1.2343565108937131,1.315910862532642) circle (2.7pt);
\draw [fill=black] (1.48,4.8) circle (2.7pt);
\draw [fill=uuuuuu] (4.55141403865717,6.1558494404883) circle (2.7pt);
\end{scriptsize}
\end{tikzpicture}
\caption{$ $  $d=8$, $t_4=1$, $t_3=6$, $t_2=4$}\label{fig: d=8,C}
\end{minipage}
\quad
\begin{minipage}{0.4\textwidth}
   \centering
\begin{tikzpicture}[line cap=round,line join=round,x=1.0cm,y=1.0cm,scale=0.7]
\clip(0.02,-2.74) rectangle (8,3.34);
\draw [color=black] (1.74,-1.7)-- (6.72,-1.7);
\draw [color=black] (6.72,-1.7)-- (4.23,2.61);
\draw [color=black] (4.23,2.61)-- (1.74,-1.7);
\draw (1.74,-1.7)-- (5.48,0.46);
\draw (2.99,0.46)-- (6.72,-1.7);
\draw (2.99,0.46)-- (4.23,-1.7);
\draw (4.23,-1.7)-- (5.48,0.46);
\draw [shift={(4.23,0.94)}] plot[domain=-1.19:4.33,variable=\t]({1*1.68*cos(\t r)+0*1.68*sin(\t r)},{0*1.68*cos(\t r)+1*1.68*sin(\t r)});
\begin{scriptsize}
\fill [color=black] (1.74,-1.7) circle (1.5pt);
\fill [color=black] (6.72,-1.7) circle (1.5pt);
\fill [color=black] (4.23,2.61) circle (1.5pt);
\fill [color=black] (4.23,-1.7) circle (1.5pt);
\fill [color=black] (2.99,0.46) circle (1.5pt);
\fill [color=black] (5.48,0.46) circle (1.5pt);
\fill [color=black] (3.61,-0.62) circle (1.5pt);
\fill [color=black] (4.85,-0.62) circle (1.5pt);
\end{scriptsize}
\end{tikzpicture}
\caption{$ $  $d=8$, $t_3=8$, $t_2=4$}
\label{fig: d=8}
\end{minipage}
\end{figure}
For case (viii) there is the M\"obius-Kantor $(8_3)$ configuration. This
   configuration cannot be drawn in the real plane. Collinearity
   is indicated by segments and the circle arch, see Figure \ref{fig: d=8}. So the minimal value of the absolute and the complex Harbourne constant for $d=8$ is $-2$.

\subsection{Nine lines}

In the case of $d=9$ observe first that $t_6\leq 1$. Indeed, for two points with multiplicity $6$ we need at least eleven lines, which is contradiction. By a similar argument we may show that if $4$ is the highest multiplicity of points from $\mathcal{P}$, then $t_4\leq 3$. As above, for four points with multiplicity $4$ we need at least ten lines (see Lemma \ref{lem: multiplicity}).
Therefore, taking into account the above facts and from easy combinatorial calculations we obtain the following candidates for the lowest value of the Harbourne constant
\begin{enumerate}
\item[\rm(i)] $T=(0,6,3,0,0,0,0,0)$ and $q(T)=-2\frac{1}{3}$,
\item[\rm(ii)] $T=(0,8,2,0,0,0,0,0)$ and $q(T)=-2,3$,
\item[\rm(iii)] $T=(0,10,1,0,0,0,0,0)$ and $q(T)=-2\frac{3}{11}$,
\item[\rm(iv)] $T=(0,7,0,0,1,0,0,0)$ and $q(T)=-2,25$,
\item[\rm(v)] $T=(0,12,0,0,0,0,0,0)$ and $q(T)=-2,25$.
\end{enumerate}
Cases (i), (ii), (iii) and (iv) are excluded by Lemma \ref{lem: points on one line}, since every configuration line meets $8$ lines so there is the even number of even multiplicity points on each line. In the cases (iii) and (iv) we have only one point of even multiplicity. For the cases (i) and (ii) observe that two of $4$-fold points have to lie on a configuration line. Then we have the following situation:
\begin{figure}[H]
\centering
\begin{tikzpicture}[line cap=round,line join=round,>=triangle 45,x=1.0cm,y=1.0cm,scale=0.5]
\clip(-4.3,-1.9199999999999986) rectangle (7.0600000000000005,6.3);
\draw [domain=-4.3:7.0600000000000005] plot(\x,{(--10.244800000000001-0.9200000000000002*\x)/5.28});
\draw [domain=-4.3:7.0600000000000005] plot(\x,{(--0.5119999999999996--0.48*\x)/-0.14000000000000012});
\draw [domain=-4.3:7.0600000000000005] plot(\x,{(--1.3976000000000002--0.5*\x)/0.24});
\draw [domain=-4.3:7.0600000000000005] plot(\x,{(-0.03280000000000083--0.5799999999999996*\x)/-0.4600000000000002});
\draw [domain=-4.3:7.0600000000000005] plot(\x,{(-2.0991999999999997--0.5599999999999998*\x)/-0.08000000000000007});
\draw [domain=-4.3:7.0600000000000005] plot(\x,{(-5.9552--1.4799999999999998*\x)/-0.52});
\draw [domain=-4.3:7.0600000000000005] plot(\x,{(-3.8919999999999986--1.16*\x)/0.18000000000000016});
\begin{scriptsize}
\draw [fill=black] (-1.72,2.24) circle (1.5pt);
\draw [fill=black] (3.56,1.32) circle (1.5pt);
\draw[color=black] (-4,2.2) node {$L_1$};
\draw[color=black] (-2.1,4.82) node {$L_3$};
\draw[color=black] (-0.16,4.6) node {$L_4$};
\draw[color=black] (-3.68,4) node {$L_2$};
\draw[color=black] (2.6,5.7) node {$L_6$};
\draw[color=black] (1.86,5) node {$L_5$};
\draw[color=black] (3.8,5.38) node {$L_7$};
\end{scriptsize}
\end{tikzpicture}
\caption{$ $  {Line with two $4$-fold points}}\label{fig: Line with two $4$-fold points}
\end{figure}
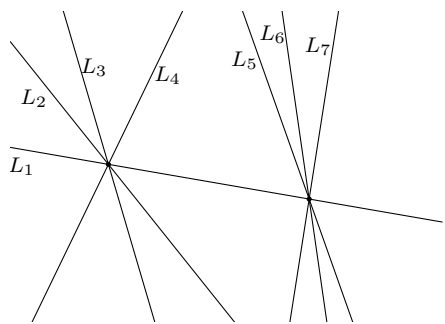
The third $4$-fold point has to lie on each line $L_2,\ldots,L_7$, which is a contradiction and excludes the case (i). In the case (ii) there is only one point with even multiplicity on lines $L_2,\ldots,L_7$.

For $d=9$ there is the dual Hesse configuration. It is easier to describe the original Hesse configuration. It arises taking the nine
order $3$ torsion points of a smooth complex cubic curve (which carries the structure of an abelian group and the torsion is understood with respect to this group structure). There are $12$ lines passing through the nine points in such a way that each line contains exactly $3$ torsion points and there are $4$ lines passing through each of the points. See \cite{ArtDol09} for details. This configuration cannot be drawn in the real plane.\\

Beside the geometrical realization over the complex numbers, the same configuration can be also easily obtained in characteristic $3$, more precisely, in the plane $\P^2(\F_3)$ taking all $13$ lines and removing from this set all $4$ lines passing through a fixed point. We leave the details to the reader.

\subsection{Ten lines}

 Finally we pass to the most involved case. To begin with we observe 
 that adding a line passing through two triple points to the dual Hesse configuration
 yields a configuration of $10$ lines with $t_4=2$, $t_3=10$, $t_2=3$ and $H_{L}(\mathbb{K},\mathcal{L})=-2\frac{4}{15}\simeq -2,2667$. It suffices thus to consider solutions $T$ of the combinatorial equality \eqnref{eq: combinatorial} 
 with quotients less than $-2\frac{4}{15}$. This list is quite long. However a considerable part of it
 can be excluded by the Triangular Inequality (Lemma \ref{lem: multiplicity}), the Quadrangle Inequality (Lemma \ref{lem: multiplicity4}) and the Parity Criterion (Lemma \ref{lem: points on one line}). In the end we are left with the following candidates for the absolute linear Harbourne constant of degree $10$:
 \begin{center}
   \renewcommand{\arraystretch}{1.2}
   \begin{tabular}{|p{0.8cm}|p{0.7cm}|p{0.7cm}|p{0.7cm}|p{0.7cm}|p{0.7cm}|p{3cm}|}
   \hline
   T & $t_2$ & $t_3$ & $t_4$ & $t_5$ & $t_6$ & $q(T)$ \\
   \hline
   (i) &  & $1$ & $7$ &  &  & -2,625 \\
   \hline
   (ii) &  & $3$ & $6$ &  &  & -2,5555555556 \\
   \hline
   (iii) &  & $5$ & $5$ &  &  & -2,5 \\
   \hline
   (iv) &  & $7$ & $4$ &  &  & -2,4545454545 \\
   \hline
   (v) &  & $8$ & $1$ &  & $1$ & -2,4 \\
   \hline
   (vi) & $3$ &  & $7$ &  &  & -2,4 \\
   \hline
   (vii) & $2$ & $7$ & $2$ & $1$ &  & -2,3333333333 \\
   \hline
   (viii) &  & $9$ & $3$ &  &  & -2,4166666667 \\
   \hline
   (ix) & $3$ & $2$ & $6$ &  &  & -2,3636363636 \\
   \hline
   (x) & $3$ & $4$ & $5$ &  &  & -2,3333333333 \\
   \hline
   (xi) & $3$ & $6$ & $4$ &  &  & -2,3076923077 \\
   \hline
   (xii) & $3$ & $8$ & $3$ &  &  & -2,2857142857 \\
   \hline
   \end{tabular}
 \end{center}
Now we consider cases (i), (ii), (iii), (iv). Since there are at least three $4$-fold points, there must be a line passing through two of them. The Parity Lemma (Lemma \ref{lem: points on one line}) implies then that this line contains also a third $4$-fold point, since there are no double points in these configurations. Hence we have the situation indicated on the following picture
\begin{figure}[H]
\centering
\begin{tikzpicture}[line cap=round,line join=round,>=triangle 45,x=1.0cm,y=1.0cm,scale=0.5]
\clip(-4.260000000000001,1.5400000000000003) rectangle (7.300000000000002,5.7799999999999985);
\draw [domain=-4.260000000000001:7.300000000000002] plot(\x,{(--22.5044--0.17999999999999972*\x)/6.0200000000000005});
\draw [domain=-4.260000000000001:7.300000000000002] plot(\x,{(-0.6299999999999999--1.2999999999999998*\x)/-0.6199999999999999});
\draw [domain=-4.260000000000001:7.300000000000002] plot(\x,{(--2.540400000000001--1.58*\x)/0.14000000000000012});
\draw [domain=-4.260000000000001:7.300000000000002] plot(\x,{(-3.790000000000001-1.4000000000000004*\x)/-0.54});
\draw [domain=-4.260000000000001:7.300000000000002] plot(\x,{(-5.6784--1.6399999999999997*\x)/0.54});
\draw [domain=-4.260000000000001:7.300000000000002] plot(\x,{(-8.834800000000001--1.3399999999999999*\x)/-0.6400000000000006});
\draw [domain=-4.260000000000001:7.300000000000002] plot(\x,{(-6.170400000000001--1.4000000000000004*\x)/0.1200000000000001});
\draw [domain=-4.260000000000001:7.300000000000002] plot(\x,{(-1.3562021644174997--0.6750811656561706*\x)/-0.08006323749918542});
\draw [domain=-4.260000000000001:7.300000000000002] plot(\x,{(-0.5869882782449976--1.3950811656561704*\x)/0.4199367625008146});
\draw [domain=-4.260000000000001:7.300000000000002] plot(\x,{(-4.015397353152098--1.2150811656561706*\x)/-0.5600632374991854});
\begin{scriptsize}
\draw [fill=black] (-1.28,3.7) circle (1.5pt);
\draw[color=black] (-0.8,3.3) node {$A$};
\draw [fill=black] (4.74,3.88) circle (1.5pt);
\draw[color=black] (5.3,3.5) node {$C$};
\draw [fill=black] (1.5600632374991854,3.7849188343438294) circle (1.5pt);
\draw[color=black] (2.2,3.4) node {$B$};
\end{scriptsize}
\end{tikzpicture}
\end{figure}
Since there are all configuration lines already visible in that picture, it is clear that there is no way to produce a fourth $4$-fold point out of lines in $3$ pencils $A$, $B$ and $C$ (see The Second Pencil Criterion). Therefore cases (i), (ii), (iii) and (iv) are excluded.\\
In the case (v) note that the Parity Criterion yields that all configuration lines belong either to the pencil through the $4$-fold point or to the pencil through the $6$-fold point (the common line cannot be a configuration line). By the Second Pencil Criterion (Lemma \ref{SPC}) there is no way to obtain any triple points.\\
For the case (vi) observe that there exists a line $L_1$ with at least two $4$-fold points (as in the Figure~\ref{fig: Line with two $4$-fold points}). Lines $L_2,\ldots,L_7$ intersect $L_1$ in $9$ double points. Together with the two $4$-fold points there would be already $11$ configuration points, a contradiction.\\
Similarly case (vii) is excluded by the First Pencil Criterion (Lemma \ref{boundary_of_points}), since the $5$-fold point and $4$-fold point must lie on a configuration line.\\
For the last five possibilities (e.g. (viii), (ix), (x), (xi) and (xii)) note that Lemma \ref{hirzebruch} implies that these configurations do not exist in the complex projective plane $\mathbb{P}^2$.\\
The configuration (viii) does however exist over some other fields. Indeed if we take all lines and all points in the plane $\mathbb{P}^2(\mathbb{F}_3)$ then we have a configuration of thirteen lines and thirteen $4$-fold points. Then we remove from this set $3$ lines passing through a fixed point. This gives a configuration of ten lines, three $4$-fold points and nine triple points. Thus the absolute linear Harbourne constant of configuration of degree $10$ is equal to $-2\frac{5}{12}$ and it is achieved in the plane $\mathbb{P}^2(\mathbb{F}_3)$.
\end{proof}

\paragraph*{\emph{Acknowledgement.}}
I would like to thank T. Szemberg for helpful discussions.

%*****************************************************************************

%***************************************************************************** % Addresses

\bigskip \small

\bigskip
   Justyna Szpond,
   Instytut Matematyki UP,
   Podchor\c a\.zych 2,
   PL-30-084 Krak\'ow, Poland

\nopagebreak
%   \textit{E-mail address:} \texttt{@up.krakow.pl}

%   \textit{E-mail address:} \texttt{@up.krakow.pl}

%   \textit{E-mail address:} \texttt{@up.krakow.pl}

   \textit{E-mail address:} \texttt{szpond@up.krakow.pl}

%*****************************************************************************

\end{document}